\let\TeXchi\chi
\newbox\chibox
\chibox \hbox{\raise\dp0 \box 0 }
\def\chi{\copy\chibox}
\newtheorem{proposition}{Proposition}[section]
\newtheorem{theorem}{Theorem}[section]
\newtheorem{remark}{Remark}[section]
\numberwithin{equation}{section}
\numberwithin{theorem}{section}
\numberwithin{definition}{section}
\numberwithin{example}{section}
\numberwithin{proposition}{section}
\numberwithin{lemma}{section}
\numberwithin{remark}{section}
\DeclareMathOperator{\Ima}{Im}
\DeclareMathOperator{\id}{id}
\DeclareMathOperator{\Hom}{Hom}
\newcommand\blfootnote[1]{%
  \begingroup
  \renewcommand\thefootnote{}\footnote{#1}%
  \addtocounter{footnote}{-1}%
  \endgroup
}
\begin{document}
%%%%%%%%%%%%%%%%%%%%%%%%%%%%%%%%%%%%%%%%%%%%%%%%%%%
\title{A (co)homology theory for some preordered topological spaces}
%%%%%%%%%%%%%%%%%%%%%%%%%%%%%%%%%%%%%%%%%%%%%%%%%%%
\author
{Manuel Norman}
%%%%%%%%%%%%%%%%%%%%%%%%%%%%%%%%%%%%%%%%%%%%%%%%%%%
\date{}
\maketitle
%%%%%%%%%%%%%%%%%%%%%%%%%%%%%%%%%%%%%%%%%%%%%%%%%%%
\begin{abstract}
\noindent The aim of this short note is to develop a (co)homology theory for topological spaces together with the specialisation preorder. A known way to construct such a (co)homology is to define a partial order on the topological space starting from the preorder, and then to consider some (co)homology for the obtained poset; however, we will prove that every topological space with the above preorder consists of two disjoint parts (one called 'poset part', and the other one called 'complementary part', which is not a poset in general): this suggests an improvement of the previous method that also takes into account the poset part, and this is indeed what we will study here.
%%%%%%%%%%%%%%%%%%%%%%%%%%%%%%%%%%%%%%%%%%%%%%%%%%%
\end{abstract}
%%%%%%%%%%%%%%%%%%%%%%%%%%%%%%%%%%%%%%%%%%%%%%%%%%%
\blfootnote{Author: \textbf{Manuel Norman}; email: manuel.norman02@gmail.com\\
\textbf{AMS Subject Classification (2010)}: 06A11, 55U10\\
\textbf{Key Words}: preorder, poset, (co)homology}
%%%%%%%%%%%%%%%%%%%%%%%%%%%%%%%%%%%%%%%%%%%%%%%%%%%
\section{Introduction}
We start describing the motivation for which we develop a (co)homology theory for topological spaces together with the specialisation preorder, i.e. the preorder defined by:
\begin{equation}\label{Eq:1.1}
x \leq y \Leftrightarrow x \in \overline{\lbrace y \rbrace}
\end{equation}
where $\overline{\lbrace y \rbrace}$ denotes the closure of the singleton set containing $y$, that is, the intersection of all the closed subsets containing such singleton (throughout this paper, $\leq$ will always indicate the specialisation preorder). We will show (see Section 2) that every topological space together with the above preorder can be assigned a poset $X / \sim$, which can be seen as an abstract subset of $X$, where the partial order is given by $\leq$ defined above. This will be called the 'poset part'. The 'complementary part' is not, in general, a poset. A usual way to obtain a (co)homology theory for a topological space under the specialisation preorder is to consider $X$ with the following partial order $\preceq$:
$$ x \prec y \Leftrightarrow x \leq y \, \text{and not} \, y \leq x $$
$$ x \preceq y \Leftrightarrow x \prec y \, \text{or} \, x=y $$
Then, $(X, \preceq)$ is a poset, and thus we can associate it various possible (co)homologies: we can assign it the order complex $\Delta(X)$ and evaluate its simplicial (co)homology, we can (if the poset satisfies some conditions) consider a cohomology with coefficients in some presheaf, we can take some colouring and consider its coloured poset (co)homology (if some assumptions are satisfied), ... (see, for instance, [1-6] and [9]). However, this constructs a new partial order starting from $\leq$; we would like to have a (co)homology that also takes into account the preorder $\leq$ itself. \textit{We will hence develop a (co)homology theory that also involves this} \textbf{preorder}, \textit{which turns out to be a partial order} \textbf{on the poset part} \textit{of} $X$, \textit{and we will consider the} \textbf{usual method} \textit{only} \textbf{on the complementary part}. Therefore, we will obtain an "improvement" of the usual (co)homology by "adding together" (in some way) the two parts of $X$.
%%%%%%%%%%%%%%%%%%%%%%%%%%%%%%%%%
\section{(Co)homology theory for $(X, \leq)$}
We begin this section proving some results that will be needed to develop our (co)homology theory. First of all, we will construct the poset and the complementary parts of $(X, \leq)$ (henceforth, $X$ will always indicate a topological space with the specialisation preorder $\leq$ or with the partial order $\preceq$). The idea of the construction of $\sim$ is similar to the one used in Section 4 of [12]: we delete the elements that violate antisymmetry \footnote{Actually, this construction for preordered spaces is well known. The new idea contained in this note is to use a 'structured approach' (see also Remark \ref{Rm:2.1}) with the "decomposition" of a preordered space into its poset and complementary parts.}.
\begin{proposition}\label{Prop:2.1}
Let $(X, \leq)$ be a topological space with the specialisation preorder. Define the equivalence relation $\sim$ on $X$ by (the closure is always w.r.t. $X$):
\begin{equation}\label{Eq:2.1}
x \sim y \Leftrightarrow x \leq y \, \text{and} \, y \leq x
\end{equation}
Then, $X/ \sim$ is a poset under $\leq$, and it can be seen as an abstract subset of $X$. This is called the 'poset part' of $X$. Its complementary w.r.t. $X$, i.e. $X \setminus (X/ \sim)$, is the 'complementary part' of $X$, and it is not, in general, a poset under $\leq$.
\end{proposition}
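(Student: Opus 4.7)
The plan is to separate the statement into three essentially independent assertions --- that $\sim$ is an equivalence relation, that $\leq$ descends to a bona fide partial order on $X/\sim$, and that the complementary part need not be a poset under $\leq$ --- and to verify each in turn. I expect the first two to be routine, with the only substance of the proposition concentrated in producing a counterexample for the third.

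Before attacking either of the first two claims I would record two elementary properties of the specialisation preorder: reflexivity, from $x\in\overline{\{x\}}$, and transitivity, via the equivalence $x\leq y \Leftrightarrow \overline{\{x\}}\subseteq\overline{\{y\}}$ (which itself follows from (\ref{Eq:1.1}) upon taking closures). With these in hand, $\sim$ is reflexive, symmetric (directly from the symmetric form of (\ref{Eq:2.1})) and transitive, so it is an equivalence relation. I would then define a relation on the quotient by $[x]\leq[y]\Leftrightarrow x\leq y$; well-definedness under change of representative follows from transitivity of $\leq$ on $X$ (replacing $x$ by $x'\sim x$ gives $x'\leq x\leq y$, and similarly for $y$). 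Reflexivity and transitivity of the induced relation descend from $X$, while antisymmetry is automatic: $[x]\leq[y]$ and $[y]\leq[x]$ force $x\sim y$, hence $[x]=[y]$.

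The clause that $X/\sim$ can be viewed as an \emph{abstract subset} of $X$ I would interpret in the obvious way: using choice, fix one representative in each $\sim$-class, and let $R\subseteq X$ be the resulting set, which is in order-preserving bijection with $X/\sim$. The complementary part is then $X\setminus R$. To show it need not be a poset under $\leq$, I would exhibit a minimal counterexample: take $X=\{a,b,c\}$ with the indiscrete topology. The only nonempty closed set is $X$, so $\overline{\{x\}}=X$ for each $x$, and $x\leq y$ holds for every ordered pair. There is therefore a single $\sim$-class; any choice of $R$ yields a two-element complementary part on which both $x\leq y$ and $y\leq x$ hold for distinct $x,y$, so antisymmetry fails no matter which representative is chosen.

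The only non-formal difficulty I anticipate is the slightly informal phrase \emph{``abstract subset of $X$''}; once I commit to the representative-picking reading above, the entire proposition reduces to the equivalence-relation bookkeeping plus the indiscrete-topology counterexample, and there are no further subtleties to address.
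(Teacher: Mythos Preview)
Your proposal is correct and follows essentially the same approach as the paper: identify $X/\sim$ with a set of chosen representatives, inherit reflexivity and transitivity from $X$, obtain antisymmetry from the very definition of $\sim$, and argue that antisymmetry fails on the complement whenever at least two elements of a single $\sim$-class land there. The only minor difference is presentational: you exhibit the concrete indiscrete three-point space, whereas the paper argues abstractly that any $\sim$-class of size at least three forces two equivalent but distinct points into the complement; your version has the small advantage of actually naming a topological space witnessing the failure.
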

\begin{proof}
First of all, notice that $X/ \sim$ can be seen as an abstract subset of $X$ by associating to each equivalence class $[a] \in X/ \sim$ some element $b \in X$ such that $[a]=[b]$. It is then clear that $\leq$ is reflexive and transitive on $X/ \sim$, because these properties hold on $X$. Moreover, whenever $x \leq y$ and $y \leq x$, we have $x \sim y$ (by definition), so that $X/ \sim$ is also antisymmetric and thus a poset. The fact that the complementary part is not, in general, a poset, easily follows since if at least one element $x$ of $X$ is such that there exist two (or more) elements $y$ and $z$ with $[x]=[y]=[z]$ and $x \neq y$, $y \neq z$, $x \neq z$, then clearly only one of these elements belong to $X / \sim$, while the other two are in $X \setminus (X / \sim)$. Consequently, antisymmetry cannot hold on the complementary part in such cases.
\end{proof}
There are however some cases where the complementary part is a poset; the (co)homology theory that we will develop continues to work properly even in such situations. Henceforth, we will not consider the spaces $(X, \leq)$ which turn out to actually be posets. This is because, in such cases, the equivalence relation $\sim$ is actually $=$, so we can just use the (co)homologies for the poset, instead of the one shown in this paper.\\
Before introducing our (co)homology, we need one more result. First of all, recall that the order complex $\Delta(P)$ associated to a poset $P$ is the simplicial complex whose vertices are the points in $P$ and whose faces are the totally ordered subsets (called 'chains') of $P$. A subcomplex of a simplicial complex is a subset which is itself a simplicial complex. We now prove the following result:
\begin{proposition}\label{Prop:2.2}
Let $X$ be a topological space together with the specialisation preorder. Define the partial order $\preceq$ on $X$ as in Section 1. Then, the order complex $\Delta(X/ \sim) |_{\preceq}$ of the poset $X/ \sim$ (the notation stresses the fact that, this time, $X/ \sim$ is considered under $\preceq$) is a subcomplex of the order complex $\Delta(X)$ of the poset $X$ (under $\preceq$).
\end{proposition}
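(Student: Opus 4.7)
The plan is to reduce the claim to two elementary observations: (i) vertices of $\Delta(X/\sim)|_{\preceq}$ are (under the representative embedding from Proposition \ref{Prop:2.1}) vertices of $\Delta(X)$, and (ii) the partial order $\preceq$ restricted to the abstract subset $X/\sim \subseteq X$ agrees with the partial order that makes $X/\sim$ a poset in its own right. Once these are in hand, every chain used as a face of $\Delta(X/\sim)|_{\preceq}$ is automatically a chain in $(X,\preceq)$, and closure under subfaces is inherited for free since $\Delta(X/\sim)|_{\preceq}$ is already a simplicial complex.

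First I would fix the embedding $\iota:X/\sim\hookrightarrow X$ assigning to each class $[a]$ one representative $b\in X$ with $[a]=[b]$ (as in the proof of Proposition \ref{Prop:2.1}). Then I would verify that $\preceq$ on $X/\sim$ matches $\preceq$ read off from $X$: if $x,y\in X/\sim$ satisfy $x\leq y$ and $x\neq y$, antisymmetry of $\leq$ on $X/\sim$ rules out $y\leq x$, so $x\prec y$ in $X$ as well; conversely, $x\prec y$ in $X$ trivially gives $x\leq y$ in $X/\sim$, hence $x\prec y$ in $X/\sim$. This is the one spot where one must be careful, since $\preceq$ was defined on all of $X$ (where $\leq$ need not be antisymmetric) while on $X/\sim$ the preorder $\leq$ is already a partial order.

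Next I would pass to faces. A face of $\Delta(X/\sim)|_{\preceq}$ is, by definition, a finite chain $x_1\prec x_2\prec\cdots\prec x_n$ in $(X/\sim,\preceq)$. By the previous paragraph, the same sequence of representatives satisfies $\iota(x_1)\prec\iota(x_2)\prec\cdots\prec\iota(x_n)$ in $(X,\preceq)$, so it is a face of $\Delta(X)$. This shows the face set of $\Delta(X/\sim)|_{\preceq}$ is contained in that of $\Delta(X)$. Combined with the fact that $\Delta(X/\sim)|_{\preceq}$ is itself a simplicial complex (hence closed under taking subsets of its faces), this is exactly the definition of a subcomplex.

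The only real obstacle is the bookkeeping around the identification $X/\sim\subseteq X$ via $\iota$: one has to check that the statement is independent of the choice of representatives, which follows because any two representatives of the same class are $\sim$-equivalent and thus cannot appear as distinct vertices in the order complex of $X/\sim$ in the first place. Beyond that, the proof is a direct unwinding of definitions.
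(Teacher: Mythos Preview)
Your argument is correct and follows essentially the same route as the paper: embed $X/\sim$ into $X$ via representatives, observe that the order $\preceq$ is inherited, and conclude that every chain of $(X/\sim,\preceq)$ is a chain of $(X,\preceq)$, hence a face of $\Delta(X)$. You are somewhat more explicit than the paper about why the intrinsic $\preceq$ on $X/\sim$ agrees with the restriction of $\preceq$ from $X$ and about representative-independence, but these are refinements of the same strategy rather than a different approach.
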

\begin{proof}
First of all, we verify that $X/ \sim$ is a poset also under $\preceq$. By Proposition \ref{Prop:2.1}, we know that $X/ \sim \, \, \subseteq X$. But nonempty subsets of posets are posets under the (induced) relation, in this case $\preceq$ (this can be easily verified), and thus $X/ \sim$ is a poset under both $\leq$ and $\preceq$ (here we need to consider the latter one).
Now, since $X/ \sim \, \, \subseteq X$, by their definition and by this inclusion, $\Delta(X / \sim) \subseteq \Delta(X)$. Indeed, since we are now considering $X/ \sim$ under the induced relation $\preceq$, it is clear that the chains (that is, the totally ordered subsets, under $\preceq$) of $X/ \sim$ must be also contained in $\Delta(X)$. Thus, the above inclusion follows, and the fact that these two sets are both simplicial complexes completes the proof of this Proposition.
\end{proof}
We can therefore consider the relative simplicial (co)homology for these two simplicial complexes (see, for instance, [7-8]). The homology groups $H_p(X, X/ \sim)$ "delete" $X/ \sim$ and thus "represent" the complementary part. By applying, as usual, the functor $\Hom( \cdot, G)$ for some abelian group $G$, we obtain a cohomology from this homology. The idea is to "sum up" the actions of the poset part and of the complementary part as follows (let $G^n _1$ be the groups in the cochain of $\Delta(X/ \sim)|_{\leq}$ and $G^n _2$ be the groups in the cochain of the relative cohomology; homology can be studied similarly):
$$ 0 \rightarrow G^0 _1 \xrightarrow{\partial^0 _1} G^1 _1 \xrightarrow{\partial^1 _1} G^2 _1 \qquad \qquad \qquad \, \, G^3 _1 \xrightarrow{\partial^3 _1} G^4 _1 \xrightarrow{\partial^4 _1} ...$$
$$ \qquad \qquad  \downarrow 0 \qquad \qquad \quad \, \,  \uparrow 0 $$
$$ \qquad \quad \, G^0 _2 \xrightarrow{\partial^0 _2} G^1 _2 \xrightarrow{\partial^1 _2} G^2 _2  $$
where $0$ denotes the trivial homomorphism \footnote{We will see, in Theorem \ref{Thm:2.1}, that using the trivial homomorphisms does not affect negatively the (co)homology obtained: indeed, it still gives us the information we wanted and actually it neither gives rise (generally) to trivial groups $(0)$. A similar construction has been also performed in [13]; see there for more details on the use of trivial homomorphisms (the situation can be easily adapted to the above case).}, and the other homomorphisms are the usual ones given by the corresponding cochain. It is clear that, this way, we obtain a new cochain which consitutes a cohomology theory for $X$, and which takes into account $X/ \sim$ under $\leq$ (this is the first cochain) and also the complementary part, represented by the relative cohomology, under $\preceq$ (the second cochain). This gives rise to the theory we were looking for. Actually, we can even be more precise on the (co)homology groups obtained. We will prove the following final result for cohomologies; the result for homologies is analogous.
\begin{theorem}\label{Thm:2.1}
Let $X$ be a topological space together with the specialisation preorder $\leq$. Consider the poset $(X/ \sim)|_{\leq}$ (under $\leq$) and the poset $X$ under $\preceq$. Then, we can define the cochain complex:
\begin{equation}\label{Eq:2.2}
0 \rightarrow G^0 _1 \xrightarrow{\partial^0 _1} G^1 _1 \xrightarrow{\partial^1 _1} G^2 _1 \xrightarrow{0}  G^0 _2 \xrightarrow{\partial^0 _2} G^1 _2 \xrightarrow{\partial^1 _2} G^2 _2 \xrightarrow{0} G^3 _1 \xrightarrow{\partial^3 _1} G^4 _1 \xrightarrow{\partial^4 _1} ...
\end{equation}
where we use both the cohomology of $\Delta(X/ \sim)|_{\leq}$ and the relative simplicial cohomology of $\Delta(X)$ (under $\preceq$) and $\Delta(X/ \sim)|_{\preceq}$ (which represents the complementary part). The cohomology groups obtained are (we indicate them by $\widehat{H}^p(X)$), for $p \in \mathbb{N}_0$:\\
$$ \widehat{H}^{6p}(X) = H^{3p}(X/ \sim) $$
$$ \widehat{H}^{6p+1}(X) = G^{3p+2} _1 / \Ima \partial^{3p+1} _1 $$
$$ \widehat{H}^{6p+2}(X) \cong \ker \partial^{3p} _2$$
$$ \widehat{H}^{6p+3}(X) = H^{3p}(X \setminus (X/ \sim)) $$
$$ \widehat{H}^{6p+4}(X) = G^{3p+2} _2 / \Ima \partial^{3p+1} _2 $$ 
$$ \widehat{H}^{6p+5}(X) \cong \ker \partial^{3p+3} _1 $$
(where we used $X/ \sim$ and $X \setminus (X/ \sim)$ to indicate the simplicial cohomology of $\Delta(X/ \sim)|_{\leq}$ and the relative simplicial cohomology of $\Delta(X)$ and $\Delta(X/ \sim)|_{\preceq}$, respectively).
\end{theorem}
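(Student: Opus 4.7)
The plan is to verify that the sequence (\ref{Eq:2.2}) is in fact a cochain complex and then, by a direct position-by-position computation, to identify its cohomology with the six families of groups listed. The first step is immediate: two consecutive maps compose to zero either because one of them is the trivial homomorphism (in which case the composition is automatically $0$), or because both come from the same underlying cochain complex (the simplicial cochain of $\Delta(X/\sim)|_{\leq}$, or the relative cochain of the pair $(\Delta(X), \Delta(X/\sim)|_{\preceq})$) and hence satisfy $d \circ d = 0$; the fact that both underlying cochain complexes are well-defined is guaranteed by Proposition \ref{Prop:2.2}.

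Next I would organise the entries of (\ref{Eq:2.2}) into blocks of six. For every $p \ge 0$ the $p$-th block consists, in order, of $G^{3p}_1, G^{3p+1}_1, G^{3p+2}_1, G^{3p}_2, G^{3p+1}_2, G^{3p+2}_2$, with a trivial homomorphism separating it from the previous and from the next block. At each of these six positions I would compute the quotient $\ker(\text{outgoing})/\Ima(\text{incoming})$. At the positions $G^{3p+2}_1$ and $G^{3p+2}_2$ the outgoing map is trivial, so the cohomology there collapses to the cokernels $G^{3p+2}_1/\Ima\,\partial^{3p+1}_1$ and $G^{3p+2}_2/\Ima\,\partial^{3p+1}_2$, which are the announced $\widehat{H}^{6p+1}$ and $\widehat{H}^{6p+4}$. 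At $G^{3p}_2$ and at the first position $G^{3(p+1)}_1$ of the next block the incoming map is trivial, so the cohomology reduces to $\ker \partial^{3p}_2$ and $\ker \partial^{3p+3}_1$, giving $\widehat{H}^{6p+2}$ and $\widehat{H}^{6p+5}$. At the remaining positions both flanking maps are genuine coboundaries, so the cohomology is the ordinary simplicial cohomology $H^{3p}(X/\sim)$ of $\Delta(X/\sim)|_{\leq}$, respectively the relative cohomology $H^{3p}(X\setminus(X/\sim))$ of the pair, which contribute $\widehat{H}^{6p}$ and $\widehat{H}^{6p+3}$.

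The final step is a straightforward matching of the six groups obtained above with the six formulas listed in the statement, using the definitions of $H^\bullet(X/\sim)$ and $H^\bullet(X\setminus(X/\sim))$ recalled there. The only real obstacle is the index bookkeeping: at every position one must correctly identify which of the two flanking maps is a genuine coboundary and which is a trivial homomorphism, and then translate the resulting kernel or cokernel into the correct named cohomology group. Nothing deeper than $d \circ d = 0$ is required, and once the alignment of the positions of (\ref{Eq:2.2}) with the residues $6p + i$ ($i = 0, \dots, 5$) is fixed, each of the six equalities is a one-line identification.
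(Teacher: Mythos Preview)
Your proposal is correct and follows essentially the same route as the paper: a direct position-by-position evaluation of $\ker/\Ima$ along the cochain (\ref{Eq:2.2}), invoking only $d\circ d=0$ and the trivial fact that $A/\{\id\}\cong A$ at the spots where the incoming map is the zero homomorphism. The paper's own argument is the same computation stated in one line; your write-up merely spells out the six residue classes explicitly.
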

\begin{proof}
The cohomology groups are simply evaluated by their definition; in some cases it is also used the well known fact that:
$$ A/ \lbrace \id \rbrace \cong A$$
(for some group $A$) which can be proved, for instance, via the first Isomorphism Theorem (considering the identity map).
\end{proof}
An important aspect to notice is that $\Delta(X/ \sim)|_{\leq}$ is defined via $\leq$, while the relative (co)homology involves $\preceq$ (both for $\Delta(X)$ and for $\Delta(X/ \sim)|_{\preceq}$, which is a subcomplex considered, in this case, w.r.t. $\preceq$).\\
We will also use this (co)homology in a paper on structured spaces (see also [12]) which is now in preparation. Actually, we also notice that, in a future paper, we will discuss Eilenberg-Steenrod axioms for square/rectangular (co)homology; the same observations also hold for the above (co)homology \footnote{This implies that this "improved" (co)homology gives interesting information about both the poset and the complementary part, thanks to some important aspects guaranteed by the ES axioms.}. In order to have the ES axioms satisfied locally, in a sense that will be made clear in such paper, it is useful to slightly modify the definition of the (co)homology above so that the "local horizontal (co)chains" (i.e. the ones which are only connected by the horizontal homomorphisms, and not by the vertical ones) are allowed to be longer. This means, for instance, that we can define a cohomology as follows:
$$ 0 \rightarrow G^0 _1 \xrightarrow{\partial^0 _1} G^1 _1 \xrightarrow{\partial^1 _1} G^2 _1 \xrightarrow{\partial^2 _1} G^3 _1 \qquad \qquad \qquad \qquad \quad \,  G^4 _1 \xrightarrow{\partial^4 _1} ...$$
$$ \qquad \qquad \qquad \qquad  \quad \, \downarrow 0 \qquad \qquad \qquad \qquad \,  \uparrow 0 $$
$$ \qquad \qquad \qquad \qquad  G^0 _2 \xrightarrow{\partial^0 _2} G^1 _2 \xrightarrow{\partial^1 _2} G^2 _2 \xrightarrow{\partial^2 _2} G^3 _2  $$
and so on. The cohomology of Theorem \ref{Thm:2.1} is said to be 'of length $3$', while the above one is said to be 'of length $4$'. It is then easy to construct various (co)homologies of length $n$, so that ES are locally satisfied. We also note that the limit of the length, for $n \rightarrow +\infty$, can be seen as the usual (co)homology of $\Delta(X / \sim)$, while the limit for $n \rightarrow - \infty$ (where we adopt the convention that the negative length changes the order of the groups above, that is, the first line contains the groups of the relative (co)homology, and the second one the groups of $\Delta(X/ \sim)$) is the usual relative (co)homology.
\begin{remark}\label{Rm:2.1}
\normalfont The idea of 'summing up' some (co)homology theories comes from the context of structured spaces and from square/rectangular (co)homologies. It is not surprising that this idea can be similarly applied to "mixed structures", where we generally consider a construction as the above one for each "component" of the given mixed object. For instance, if we have a partially ordered group $T$, we can assign it some group cohomology (choose some $T$-module), and (supposing it is also graded) some different cohomology theories for posets. More precisely, if $G^n _i$, $i=1,...,4$ represent the groups in the cochain of group cohomology, simplicial cohomology of the order complex of the poset, cellular cohomology for the poset (assumed to be graded) and coloured cohomology, respectively, then we can sum up everything as follows (again, we can change the length $n$, which below is equal to $3$, so that the ES axioms are locally satisfied):
$$ 0 \rightarrow G^0 _1 \xrightarrow{\partial^0 _1} G^1 _1 \xrightarrow{\partial^1 _1} G^2 _1  \qquad \qquad \qquad \qquad \qquad \qquad \qquad \qquad \qquad \qquad \qquad \qquad \quad \, $$
$$  \, \downarrow 0 \qquad \qquad \qquad \qquad \qquad \qquad \qquad \quad $$
$$   G^0 _2 \xrightarrow{\partial^0 _2} G^1 _2 \xrightarrow{\partial^1 _2} G^2 _2 \qquad \qquad \qquad \qquad$$
$$ \qquad \qquad \qquad \qquad  \quad \, \downarrow 0 \qquad \qquad \qquad \qquad \qquad \,  $$
$$ \qquad \qquad \qquad \qquad \qquad \qquad \qquad \qquad  G^0 _3 \xrightarrow{\partial^0 _3} G^1 _3 \xrightarrow{\partial^1 _3} G^2 _3 \qquad \qquad \qquad \quad G^3 _3 \xrightarrow{\partial^3 _3} ...$$
$$ \qquad \qquad \qquad \qquad \qquad \qquad \qquad \qquad \qquad \qquad \quad  \downarrow 0 \qquad \qquad \qquad \uparrow 0   $$
$$ \qquad \qquad \qquad \qquad \qquad \qquad \qquad \qquad \qquad \qquad \quad G^0 _4 \xrightarrow{\partial^0 _4} G^1 _4 \xrightarrow{\partial^1 _4} G^2 _4   $$
Something similar can be done in the general case. This 'structured approach' can be often useful when dealing with objects that are "mixed" in some sense.
\end{remark}
%%%%%%%%%%%%%%%%%%%%%%%%%5
\section{Conclusion}
In this short note we have provided an "improvement" of a (co)homology theory for topological spaces $X$ preordered via the specialisation preorder. This is done considering the poset part of such a topological space, i.e. an abstract subset which turns out to be always a poset under the specialisation order. Then, we "sum up" the actions of the this part and the complementary part of $X$ and find an interesting (co)homology theory. This idea, which arises from the context of structured spaces, can also be applied in more general situations, as explained in Remark \ref{Rm:2.1}.\\
\\
\begin{large}
\textbf{References}
\end{large}
\\
$[1]$ Wachs, M. L. (2007). Poset topology: tools and applications. Geometric Combinatorics, IAS/Park City Math. Series 13, (Miller, Reiner, and Sturmfels, eds.), American Mathematical Society, Providence, RI,497-615\\
$[2]$ Everitt, B.; Turner, P. (2009). Homology of coloured posets: A generalisation of Khovanov's cube construction. Journal of Algebra, Vol. 322, No. 2, 429-448\\
$[3]$ Everitt, B.; Turner, P. (2015). Cellular cohomology of posets with local coefficients. Journal of Algebra, Vol. 439, 134-158\\
$[4]$ Cianci, N.; Ottina, M. (2019). Homology of posets with functor coefficients and its relation to Khovanov homology of knots. Preprint (arXiv:1907.03974)\\
$[5]$ Brun, M.; Bruns, W.; R\"omer, T. (2007). Cohomology of partially ordered sets and local cohomology of section rings. Advances in Mathematics, 208(1): 210-235\\
$[6]$ D\'iaz, A. (2007). A method for integral cohomology of posets. Preprint (arXiv:0706.2118)\\
$[7]$ Gallier, J.; Quaintance, J. (2019). A Gentle Introduction to Homology, Cohomology, and
Sheaf Cohomology, University of Pennsylvania\\
$[8]$ Robinson, M.; Capraro, C.; Joslyn, C.; Purvine, E.; Praggastis, B.; Ranshous, S.; Sathanur, A. V. (2018). Local homology of abstract simplicial complexes. Preprint (arXiv:1805.11547)\\
$[9]$ Barmak, J. A.; Minian, E. G. (2012). $G$-colorings of posets, coverings and presentations of the fundamental group. Preprint (arXiv:1212.6442)\\
$[10]$ Bradley, P. E.; Paul, N. (2013). Dimension of Alexandrov Topologies. Preprint (arXiv:1305.1815)\\
$[11]$ Arenas, F. (1999). Alexandroff spaces. Acta Math. Univ. Comenianae Vol. LXVIII, 1 , 17-25\\
$[12]$ Norman, M. (2020). On structured spaces and their properties. Preprint (arXiv:2003.09240)\\
$[13]$ Norman, M. (2020). Two cohomology theories for structured spaces. In preparation\\
$[14]$ Raptis, G. (2010). Homotopy theory of posets. Homology Homotopy Appl. 12, no. 2, 211-230\\
$[15]$ Brun, M.; Bruns, W.; R\"omer, T. (2007). Cohomology of partially ordered sets and local cohomology of section rings. Advances in Mathematics, 208(1): 210-235\\
$[16]$ Curry, J. (2019). Functors on Posets Left Kan Extend to Cosheaves: an Erratum. Preprint (arXiv:1907.09416)\\
$[17]$ Schr\"oder, B. (2016). Ordered sets: An Introduction with Connections from Combinatorics to Topology, 2nd edition. Birkh\"auser\\
$[18]$ Folkman, J. (1966). The homology groups of a lattice. J. Math. Mech. 15, 631-636\\
$[19]$ Brown, K. S. (1972). Cohomology of Groups. Graduate Texts in Mathematics, 87, Springer-Verlag

\end{document}